\title{Borel Polychromatic Number of Grids}
\author{Katalin Berlow and Edward Hou}
\date{\today}
\newcommand{\Z}{\mathbb{Z}}
\newcommand{\N}{\mathbb{N}}
\newcommand{\actson}{\curvearrowright}
\newtheorem{theorem}{Theorem}[section]
\newtheorem{lemma}[theorem]{Lemma}
\newtheorem{cor}[theorem]{Corollary}
\newtheorem{ques}[theorem]{Open Question}
\theoremstyle{definition}
\newtheorem{definition}[theorem]{Definition}
\newcounter{xmpl}
\newtheorem{example}[xmpl]{Example}
\begin{document}

\maketitle

\begin{abstract}
We study Borel polychromatic colorings of grid graphs arising from free Borel actions of $\mathbb{Z}^d$. 
A polychromatic coloring is one in which every unit $d$-dimensional cube sees all available colors. In the classical setting, every grid admits a $2^d$-polychromatic coloring, while in the Borel setting this fails. Our main result shows that every free $\Z^d$-action admits a Borel $(2^d-1)$-polychromatic coloring. This result is sharp: any action where the generators act ergodically does not admit a Borel $2^d$-polychromatic coloring.  
We conclude with open directions for extending the theory beyond cube tilings and for exploring the dependence of Borel polychromatic numbers on the underlying action.
\end{abstract}

\section{Introduction} 
A labeling of a graph together with a finite collection of sets is called \emph{polychromatic} if every member of a specified family of finite sets sees all colors. The first example of this to be studied was \emph{face-polychromatic} colorings of planar graphs embedded in the plane \cite{Alon-poly}. These are colorings in which every face of $G$ must contain each color. More generally, if $G$ is a graph and $S$ is a family of subsets of $V(G)$, a $k$-labeling is $k$-polychromatic with respect to $S$ if every $s \in S$ contains vertices of all $k$ colors. The \textit{largest} such $k$ is called the \emph{polychromatic number} $\chi^p(G,S)$. These problems have been studied extensively \cite{Polycite1, Polycite2} in finite combinatorics, where they connect to classical questions in hypergraph coloring, set systems, and additive combinatorics.

In this paper we initiate the study of the Borel analog of polychromatic coloring problems. Our focus is on grid graphs arising from free Borel actions of $\mathbb{Z}^d$. Given such an action $\mathbb{Z}^d \actson X$, the associated \emph{Schreier graph} $G$ has vertex set $X$ and edges connecting $x \in X$ to $e_i \cdot x$ for each of the generators $e_0, \dots, e_{d-1}$ of $\mathbb{Z}^d$. We call the Schreier graph of a free action of $\Z^d$ a \textit{grid graph}. We take the family of subsets $S$ to be the collection of $d$-dimensional unit hypercubes in $G$, namely sets of the form $\{0,1\}^d\cdot x$ for $x\in X$. Note that in the case of $\Z^2$, the set $S$ is the set of faces of the grid graph. A labeling $c : X \to [k]$ is then \emph{Borel $k$-polychromatic} if $c$ is Borel and every hypercube in $S$ contains all $k$ colors. The \emph{Borel polychromatic number} of $G$, denoted $\chi^p_B(G)$, is the greatest $k$ so that $G$ admits a Borel polychromatic $k$-coloring. Note that this differs from proper coloring: larger $k$ are more difficult to obtain than smaller ones. 

Requiring the labeling to be Borel fundamentally alters the landscape of coloring problems. In the definable setting of Borel combinatorics, global structural constraints emerge. In descriptive set theory, this phenomenon was first systematically studied by Kechris, Solecki, and Todorcevic \cite{KST1999}, who showed that the Borel chromatic number of Borel graphs can differ drastically from the classical setting. Since then, Borel combinatorics has flourished, with major contributions on measurable matchings \cite{ConleyTuckerDrob2015}, edge-colorings \cite{MarksUnger2015}, and connections to distributed algorithms and locally checkable labelings \cite{BBLW2025, Bernshteyn2020}. For surveys on the topic, see \cite{GrebikVidnyanszky, KechrisMarks}. These works illustrate the central theme that requiring measurability constraints creates new obstructions absent in the finite case.

Our main result fully characterizes the Borel polychromatic number of $\Z^d$.
 
\begin{theorem}
\label{thm:main}
Let $G$ be the grid graph induced by a free Borel action of $\mathbb{Z}^d$ on a standard Borel space. Then $G$ admits a Borel polychromatic $(2^d-1)$-coloring. 
\end{theorem}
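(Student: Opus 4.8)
The plan is to transport an explicit classical $(2^d-1)$-polychromatic coloring of $\Z^d$ onto an arbitrary free action, using the single extra color as slack to absorb the local defects that arise because a free Borel $\Z^d$-action need not carry a Borel ``parity'' function. The model coloring on $\Z^d$ itself is $c_0(x)=\bigl(\sum_{i=0}^{d-1}2^ix_i\bigr)\bmod(2^d-1)$: since $\{\sum_i 2^i\epsilon_i:\epsilon\in\{0,1\}^d\}=\{0,1,\dots,2^d-1\}$, every unit cube $v+\{0,1\}^d$ sees all $2^d-1$ colors, with $c_0(v)=c_0(v+(1,\dots,1))$ the unique repeated color (attained on the main diagonal). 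I would treat $c_0$, together with its finitely many coordinate symmetries and shears $c_0\circ\rho$, as a purely local template.

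Next I would invoke a Borel rectangular tiling of the action: for large $N$ there is a Borel partition of $X$ into ``boxes,'' each $\Z^d$-isomorphic to a product of intervals with side lengths in a fixed finite interval $[N,CN]$, together with a Borel choice of local coordinates $x\mapsto x^{(B)}$ in each box $B$. Because the boxes are large, the box‑adjacency graph has bounded degree, hence (by Kechris--Solecki--Todorcevic) a Borel proper coloring with finitely many colors; this lets me process boxes in finitely many rounds. On each box I color $x$ by $\bigl(c_0(\rho_B(x^{(B)}))+\lambda_B\bigr)\bmod(2^d-1)$ for a template symmetry $\rho_B$ and a phase $\lambda_B\in\Z_{2^d-1}$ chosen in $B$'s round. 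Every cube lying inside a single box is then automatically polychromatic. For a cube straddling the common face of two boxes in a single coordinate direction, the colors it sees form a union of two translates of a fixed $2^{d-1}$‑element subset of $\Z_{2^d-1}$, and a direct computation pins down exactly which relative decorations force this union to be all of $\Z_{2^d-1}$; since the structure has bounded degree and the decoration repertoire can be made large, the rounds can be carried out Borel‑ly so that every face‑straddling cube becomes polychromatic.

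\textbf{The main obstacle} is the cubes that meet three or more boxes at once --- those concentrated near the codimension‑$\ge 2$ skeleton of the tiling. This skeleton is a sparse Borel set, splittable into finitely many classes each consisting of bounded‑distance clusters of junctions surrounded by an already‑correctly‑colored collar, so it too can be processed in finitely many rounds. Within each cluster I must recolor a bounded neighborhood so that every cube there becomes polychromatic while the collar is untouched; this is precisely where the spare color is indispensable, since each repaired cube is allowed to keep one repeated color. Showing that such a local patch always exists is the crux of the argument: I would attack it by a finite (if intricate) case analysis of the possible junction pictures, or by running a lower‑dimensional instance of the same coloring problem along the skeleton. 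An alternative packaging of the whole proof is to phrase ``every unit cube sees $2^d-1$ colors'' as a locally checkable labeling problem on $\Z^d$ and invoke the characterization of Borel‑solvable labelings for $\Z^d$‑actions, the decisive point again being that $c_0$ is a sufficiently flexible periodic solution --- flexible precisely because of the one repeated color --- for the relevant local obstruction to vanish.
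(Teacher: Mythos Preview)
Your template $c_0$ and the tiling strategy are reasonable starting points, but the proposal has a genuine gap precisely where you flag it: the codimension-$\ge 2$ skeleton. You say that patching the cubes near multi-box junctions ``is the crux of the argument'' and propose either a finite case analysis, a lower-dimensional recursion, or an appeal to an LCL characterization, but none of these is carried out. In a flat box tiling of a $\Z^d$-action, up to $2^d$ boxes can meet at a vertex, and a unit cube there sees vertices from all of them; the constraints imposed on $(\rho_B,\lambda_B)$ by the different incident faces interact, and it is not at all clear that a consistent choice exists, let alone that a bounded local repair is always possible once the surrounding collar has been frozen. Even the codimension-$1$ face case is only asserted: you still owe the check that, given the decorations already fixed on neighbors in earlier rounds, a valid decoration for the current box remains. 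The LCL route is not a proof either---there is no off-the-shelf theorem of the form ``flexible periodic solution $\Rightarrow$ Borel solution'' for general LCLs on $\Z^d$.

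The paper sidesteps the junction problem entirely by replacing the flat tiling with a Borel \emph{toast}: a nested hierarchy of finite pieces in which each inner piece is surrounded by a thick collar lying wholly inside the next piece. There are no multi-way corners; every boundary is between exactly two regions (an inner piece and its enclosing piece). What remains is to interpolate between two repetitive $(2^d-1)$-templates across a thick annulus, and for this the paper proves the combinatorial lemma you are missing: any two surjective $(2^d-1)$-labelings of $\{0,1\}^d$ are connected by a chain of at most $2^{d+1}$ surjective labelings, consecutive ones differing in a single vertex. Laying these intermediate labelings on concentric shells of width $2d$ gives a polychromatic interpolation, because any unit cube meets at most two adjacent shells and hence sees one of two cube-labelings that differ in at most one vertex. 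That lemma is exactly where the ``one spare color'' is cashed in, and it is the clean substitute for the corner analysis you left undone.
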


The proof of the existence of a polychromatic
$(2^d-1)$ Borel coloring is given in Section 3. The proof uses a Borel
\emph{toast} construction, and Borel toasts for free Borel actions of $\Z^d$
which were first shown to exist by Gao, Jackson, Khrone, and Seward
\cite{GJKS2015}. We recall the definitions of these toasts in Section 2.

 Additionally, this bound is sharp;   although a polychromatic $2^d$-coloring exists in the classical setting, this coloring may not be Borel. Our proof of this first
establishes that any polychromatic $2^d$-coloring has to exhibit a
certain type of invariance in at least one generator. We then use a
standard ergodic-theoretic argument to show no such Borel coloring can
exist similarly to the proof that the irrational rotation of the
circle is not measurable $2$-colorable. In section 5, we discuss some
open problems about exactly how much rigidity must exist for
polychromatic $2^d$-colorings.

\section{Background}

We begin by recalling standard terminology from descriptive set theory and Borel combinatorics. Throughout, we write $X$ for a standard Borel space and $G$ for a Borel graph.

\subsection*{Borel graphs}

A \emph{standard Borel space} is a measurable space $(X,\mathcal{B})$ arising from the Borel $\sigma$-algebra $\mathcal{B}$ of a Polish topology on $X$. Equivalently, it is a set equipped with the $\sigma$-algebra generated by the open sets of some separable completely metrizable topological space. Such spaces are known as \textit{Polish}. A \emph{Borel graph} on $X$ is a Borel set $G \subseteq X \times X$ which is symmetric and disjoint from the diagonal. We view $G$ as the edges of the graph and $X$ as the vertex set. 

Given a graph $G$ on a standard Borel space $X$, and a countable set of labels $\Sigma$, a \emph{Borel labeling} of $G$ is a Borel function $c : X \to \Sigma$. Equivalently, a Borel labeling is a labeling where the set of points with a given label is a Borel set. We call a Borel labeling of $G$ a \textit{Borel proper k-coloring} if it is a proper coloring in the classical sense (no two adjacent vertices share a color) and the label set has at most $k$-many elements. The \emph{Borel chromatic number} $\chi_B(G)$ is the least $k$ for which there exists a Borel proper $k$-coloring of $G$. The Borel chromatic number is bounded below by the classical chromatic number of the graph, but may differ drastically. In particular, there are graphs which are classically two colorable, but are not even countably Borel colorable.

A Borel graph $G$ is \emph{hyperfinite} if it can be written as an increasing union of Borel subgraphs each of whose connected components are finite. Hyperfiniteness is central in Borel combinatorics: it captures the idea that a graph admits finite approximations in a Borel-uniform way. In particular, grids are hyperfinite.

\subsection*{Toast decompositions}

One of the most useful tools for working with hyperfinite Borel graphs is the notion of a \emph{toast} decomposition, introduced by Gao, Jackson, Krohne, and Seward \cite{GJS2009}. A toast provides a partition of a Borel graph into finite pieces that are ``nested'' in a controlled way, making it possible to build Borel solutions inductively. 

\begin{definition}
Let $G$ be a Borel graph on a standard Borel space $X$. A Borel $r$-\emph{toast} $\tau\subseteq [X]^{<\infty}$ for $G$ is a Borel collection of finite subsets of $X$, called \textit{toast pieces} so that: 
\begin{enumerate}
    \item Distinct $K,K'\in \tau$ cannot overlap nontrivially, and in fact, are $r$-apart in the graph metric: either $$B_r(K) \subseteq K', B_r(K') \subseteq K, \text{ or } B_r(K) \cap K' = \varnothing.$$ Here $r$-balls are taken in the graph metric, meaning that for $A\subseteq X$, $B_r(A)$ is defined as the set of vertices whose shortest $G$-path to a vertex in $A$ has length at most $r$. 
    \item We have that the pieces union to the whole space: $$\bigcup \tau = X.$$
\end{enumerate}
\end{definition}

Intuitively, a toast is a hierarchy of finite ``tiles'' that grow to cover each connected component of $G$.

Toast decompositions are particularly powerful for constructing Borel colorings and labelings: one colors the minimal toast pieces (those not containing any other pieces as a subset) at stage $0$, then extends the coloring piece by piece at higher stages while ensuring consistency. This inductive construction guarantees Borel definability, and is the main way toast is exploited in practice.

\begin{lemma}[Toast for grids, {\cite{GJKS2015}}]
\label{lem:toast}
Every Borel graph induced by a Borel action of $\Z^d$ admits a toast decomposition. 
\end{lemma}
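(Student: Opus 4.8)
The plan is to derive the lemma from the rectangular tiling theorem for free Borel $\Z^d$-actions and then to convert a sequence of tilings at geometrically separated scales into a toast by an inductive boundary-adjustment argument. Throughout, every object will be defined locally in the tilings — each vertex reading off only a bounded neighborhood — so Borelness is automatic and I will not comment on it further. The combinatorial input is the Gao--Jackson marker machinery \cite{GJKS2015}: every free Borel action of $\Z^d$ admits, for each scale $N$, a Borel \emph{rectangular tiling} $\mathcal{T}_N$, i.e.\ a Borel partition of $X$ whose classes are translates of boxes $\prod_{i<d}[0,a_i)$ with each side length $a_i\in\{N,N+1\}$. Such a tiling has flat walls and bounded aspect ratio, so its wall set is a codimension-one sheet complex spaced $\approx N$ apart in each direction; moreover, applying the orthogonal marker construction one can take a few such tilings at each scale in ``general position,'' which will matter for coverage below.

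Fix the separation parameter $r$, choose scales $r\ll N_1\ll N_2\ll\cdots$ growing fast enough that each $N_{k+1}$ dwarfs $N_k$ and all the error terms below (say $N_{k+1}\ge N_k^{2}$ after $N_1\gg r$), and for each $k$ fix a tiling $\mathcal{T}_k:=\mathcal{T}_{N_k}$. I would build the toast in generations, $\tau=\bigcup_k\tau_k$, maintaining the invariants: (a) any two members of $\tau_{\le k}$ are $r$-nested or $r$-separated (the toast condition, restricted to generations $\le k$); (b) each member of $\tau_j$ has diameter $O(N_j)$ and contains a ball of radius $\Omega(N_j)$; (c) the uncovered set $U_k:=X\setminus\bigcup\tau_{\le k}$ decreases in $k$ and lies within an $O(r+N_{k-1})$-neighborhood of the wall set of $\mathcal{T}_k$.

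For the inductive step, from $\tau_{\le k}$ and $\mathcal{T}_{k+1}$ one forms $\tau_{k+1}$ by ``snapping.'' Given a tile $S\in\mathcal{T}_{k+1}$, for each maximal $K\in\tau_{\le k}$ with $K\cap S\neq\varnothing$ — these are pairwise $r$-separated by (a), so their $r$-balls are disjoint — one either enlarges $S$ to swallow all of $B_r(K)$ or shrinks $S$ to be disjoint from $B_r(K)$, according to whether the center of $K$ lies in $S$. Since $\operatorname{diam} K=O(N_k)\ll N_{k+1}$ by (b), these edits touch only an $O(N_k)$-collar of $\partial S$; one then checks that the modified pieces are pairwise $r$-separated, are $r$-nested-or-separated with every earlier piece, and still carry a central ball of radius $\Omega(N_{k+1})$. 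Setting $\tau_{k+1}$ to be this collection of modified tiles preserves (a) and (b).

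The main obstacle is coverage. To obtain $U_{k+1}\subsetneq U_k$ with $\bigcap_k U_k=\varnothing$, the new pieces must additionally absorb the thin ``wall collars'' making up $U_k$ — but near a wall of $\mathcal{T}_{k+1}$ two adjacent modified tiles cannot both grab the same sheet of $U_k$ without violating (a), so one must route the new walls through the already-covered region $\bigcup\tau_k$ while still leaving no point of $U_k$ behind, and then argue that no vertex can lie in the wall-neighborhood at \emph{every} scale, which is exactly where the general-position choice of the tilings is used. Reconciling these three competing demands in a single step — compatibility with earlier generations, mutual $r$-separation inside the new generation, and total coverage of the old wall collars — is the crux, and it is what forces both the rapid growth of the $N_k$ (so that snapping stays confined to thin collars) and the orthogonal structure of the tilings. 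Once the construction is in place, the two toast axioms hold by (a) and by $U_k\downarrow\varnothing$, and invariants (b)--(c) show that $\{K\in\tau:x\in K\}$ is an infinite $\subseteq$-chain for every $x$, so the toast genuinely exhausts each orbit.
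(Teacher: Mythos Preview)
The paper does not give its own proof of this lemma; it is quoted as a black box from \cite{GJKS2015}. So there is nothing to compare your argument \emph{to} in the paper itself, only the original GJKS construction.

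Your outline is in the right spirit---rectangular marker tilings at rapidly growing scales, with each new generation absorbing or avoiding the earlier pieces---and you correctly flag coverage as the crux. But the snapping step as you describe it has a concrete gap before you even get to coverage: modifying a tile $S\in\mathcal{T}_{k+1}$ only near the old pieces $K$ does not make neighboring modified tiles pairwise $r$-separated. Along the portions of $\partial S$ where no old $K$ is nearby, adjacent tiles of $\mathcal{T}_{k+1}$ still abut, since $\mathcal{T}_{k+1}$ is a partition. You assert ``one then checks that the modified pieces are pairwise $r$-separated,'' but the procedure as written does not deliver this; one must additionally trim an $r$-collar off every tile (or, equivalently, take the new piece to be the tile's interior minus a boundary layer), and it is exactly this trimming that produces the uncovered wall sheets $U_k$ you then try to absorb at later stages. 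You seem to be aware of this tension in your coverage paragraph, but it is not reflected in the actual definition of $\tau_{k+1}$. Finally, the resolution of coverage in GJKS is not really ``general position of independent tilings'' but a specific orthogonal-marker refinement ensuring every point is eventually deep inside some piece; your sketch names the difficulty honestly but stops short of the mechanism that closes it.
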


\cref{lem:toast} leads to a proof of the fact that Borel grids are properly $3$-colorable.

\subsection*{Ergodicity}

Let $(X,\mu)$ be a standard probability space and $\Gamma \curvearrowright X$ a measure-preserving action of a countable group. The action is \emph{ergodic} if every $\Gamma$-invariant Borel set has measure $0$ or $1$. Ergodicity plays a key role in showing that certain Borel labelings cannot exist.

To illustrate that there are natural ergodic actions of $\mathbb{Z}^d$, it is helpful to recall the Bernoulli shift. 
More generally, for any $d \geq 1$, consider the Bernoulli shift action of $\mathbb{Z}^d$ on the product probability space 
\[
X = \{0,1\}^{\mathbb{Z}^d}, \qquad \mu = \left(\tfrac{1}{2},\tfrac{1}{2}\right)^{\otimes \mathbb{Z}^d}.
\]
For $g \in \mathbb{Z}^d$, the action is defined by 
\[
(g \cdot x)(h) = x(h-g) \qquad \text{for all } x \in X, \, h \in \mathbb{Z}^d.
\]
This is a measure-preserving action, since shifts of the index set permute the product measure.

Moreover, the Bernoulli shift is \emph{ergodic} \cite{KechrisMiller}: if $A \subseteq X$ is a measurable set with $g \cdot A = A$ for all $g \in \mathbb{Z}^d$, then by Kolmogorov’s zero--one law the membership of $x$ in $A$ must be determined by finitely many coordinates. 
But shift-invariance forces $A$ to have the same probability regardless of the values on those finitely many coordinates, hence $\mu(A)\in \{0,1\}$. 

We also have the stronger fact that this action of  $\mathbb{Z}^d$ is also ergodic when restricted to each of the $\mathbb{Z}$-subactions generated by the standard basis vectors. 
That is, shifting in the horizontal or vertical direction alone already yields an ergodic $\mathbb{Z}$-action. 
Thus the Bernoulli shift provides a canonical example of an action of $\mathbb{Z}^d$ that is ergodic in each coordinate component.

\section{Constructing $(2^d-1)$-colorings}
In this section, we show that every free Borel action of $\Z^d$ admits a $(2^d-1)$-polychromatic coloring.  
The key idea is that one can  construct highly structured colorings with $2^d-1$ colors by carefully interpolating between local cube configurations.  
We begin by establishing a combinatorial lemma (Lemma~\ref{cubecoloring}) ensuring that any two surjective labelings of the $d$-cube with $2^d-1$ colors can be connected through a sequence of labelings differing in only one vertex at a time.  Intuitively, we can slide between any two surjective cube colorings without ever losing surjectivity. This local flexibility will allow us to align repetitive colorings across distant regions, and ultimately, by means of a toast construction, extend them to a global Borel $(2^d-1)$-polychromatic coloring.

\begin{lemma}\label{cubecoloring}
Let $\ell$ be a set of $2^d-1$ colors.  
If $c_A,c_B:\{0,1\}^d\to\ell$ are surjective labelings of the $d$–cube,  
then there exists a sequence of $2^{d+1}$-many surjective labelings
\[
c_A=:c_0,c_1,\dots,c_{2^{d+1}}=:c_B
\]
such that each consecutive pair $c_t,c_{t+1}$ differs on at most one vertex of the cube.
\end{lemma}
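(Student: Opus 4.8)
The plan is to think of a surjective labeling $c:\{0,1\}^d\to\ell$ with $|\ell|=2^d-1$ as a surjection from a set of size $2^d$ onto a set of size $2^d-1$; such a map is determined by choosing which color is ``doubled'' (hit twice), which pair of cube vertices realizes that doubling, and then a bijection between the remaining $2^d-2$ vertices and the remaining $2^d-2$ colors. The key structural observation is that a single vertex recoloring either (a) changes the color of the unique ``free'' vertex in a doubled pair to some unused... wait — there are no unused colors; more precisely, moving one vertex from color $i$ to color $j$ keeps surjectivity exactly when $i$ was doubled and $j$ was not, after which $j$ becomes doubled. So the ``legal'' moves between surjective labelings are: pick the currently-doubled color $i$, pick one of its two vertices, and recolor it to the color $j$ currently held by some other vertex $v$; the new doubled color is $j$, doubled at $v$ and the moved vertex.

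First I would fix a ``hub'' labeling $c_\ast$ and show every surjective $c$ can be driven to $c_\ast$ in at most $2^d$ single-vertex steps; concatenating the path from $c_A$ to $c_\ast$ with the reverse of the path from $c_B$ to $c_\ast$ then gives a path of length at most $2^{d+1}$ (padding with repeated labelings if a shorter path is produced, so the length is exactly $2^{d+1}$ as stated). To reach $c_\ast$ from $c$: since both are surjective, compare them vertex by vertex; I claim one can repair disagreements one at a time. Formally, I would induct on the number $k$ of vertices where $c$ and $c_\ast$ differ. If $k>0$, there is a vertex $v$ with $c(v)\neq c_\ast(v)$; let $j=c_\ast(v)$. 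Because $c$ is surjective and differs from $c_\ast$ somewhere, a counting argument shows some color is doubled under $c$; the hub $c_\ast$ should be chosen so that I can always find a legal move that fixes one more vertex without unfixing any already-correct vertex. The cleanest way to guarantee this is an ordering argument: process the $2^d$ vertices of the cube in a fixed order $v_1,\dots,v_{2^d}$ and inductively recolor $v_m$ to $c_\ast(v_m)$, arguing that at each stage the ``surplus'' (the doubled color) can be routed to the vertex currently being fixed.

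The step I expect to be the main obstacle is verifying that this routing never gets stuck — that is, when I want to recolor the current vertex $v_m$ to its target color $j=c_\ast(v_m)$, I can always do so (possibly via one intermediate move) while the already-corrected vertices $v_1,\dots,v_{m-1}$ remain at their target colors and surjectivity is preserved throughout. The subtlety is that the doubled color under the current labeling might coincide with a color I have already ``locked in'', so a naive recoloring of $v_m$ would destroy surjectivity; handling this may require a two-step detour (first move the surplus onto $v_m$, then adjust), which is exactly why the bound is $2^{d+1}=2\cdot 2^d$ rather than $2^d$. Once this local flexibility lemma is in place, bounding the total length and padding to hit exactly $2^{d+1}$ is routine.
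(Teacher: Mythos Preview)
Your core mechanism---the ``two-step detour'' that first makes the current vertex $v_m$ carry the surplus and then recolors $v_m$ to its target---is exactly what the paper does. The gap is in the bookkeeping around the hub. You claim each leg $c\to c_\ast$ takes at most $2^d$ single-vertex steps, but your own analysis shows each of the $2^d$ vertices may need two moves, so a leg actually costs up to $2\cdot 2^d=2^{d+1}$ steps, and the concatenated path through the hub has length $2^{d+2}$, overshooting the lemma by a factor of $2$. The fix is to drop the hub entirely: the paper runs your vertex-by-vertex procedure directly from $c_A$ to $c_B$, so that $2^{d+1}=2\cdot 2^d$ is the \emph{total} count, not the per-leg count.

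One further detail you should pin down (and which the paper handles explicitly): to guarantee the detour never touches an already-locked vertex, order the vertices so that the last one, $v_{2^d-1}$, is one of the two vertices sharing a color under $c_B$. Then $c_B(v_0),\dots,c_B(v_{2^d-2})$ are pairwise distinct, so once $v_0,\dots,v_{i-1}$ have been set to their $c_B$-values the unique repeated color under the current labeling must occur at some index $k\ge i$; the first of the two moves is made at $v_k$ and therefore never unfixes anything. Without this ordering choice your ``process in a fixed order'' plan can get stuck exactly in the way you anticipated.
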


\begin{proof}
We will transform $c_A$ into $c_B$ one vertex at a time, always preserving surjectivity.  
Enumerate the cube’s vertices as $v_0,\dots,v_{2^d-1}$ so that $v_{2^d-1}$ is a vertex whose color in $c_B$ is repeated.  
(Such a vertex exists since $c_B$ uses $2^d-1$ colors on $2^d$ vertices.)

\medskip\noindent
\emph{Inductive invariant.}  
We construct surjective labelings $c_{2i}$ for $i=0,1,\dots,2^d$ so that
\[
c_{2i}(v_j)=c_B(v_j)\quad\text{for all }j<i.
\]
Thus after $2i$ steps, the first $i$ vertices have already been corrected. Let $c_0=c_A$. 

\medskip\noindent
\emph{Inductive step.}  
Suppose $c_{2i}$ has been defined with the invariant above.  
Since $c_{2i}$ is surjective onto $\ell$, it must use some color twice.  
Let $k\ge i$ be the largest index such that $c_{2i}(v_k)$ is a repeated color.  
(Note: $k\ge i$ because among $v_0,\dots,v_{i-1}$ the colors are already fixed to $c_B$, which are all distinct except at $v_{2^d-1}$.)

Now define the next two labelings:
\[
c_{2i+1}(v_j)=
\begin{cases}
c_{2i}(v_i), & j=k,\\
c_{2i}(v_j), & j\neq k,
\end{cases}
\qquad
c_{2i+2}(v_j)=
\begin{cases}
c_B(v_i), & j=i,\\
c_{2i+1}(v_j), & j\neq i.
\end{cases}
\]

\medskip\noindent
Note that:
\begin{itemize}
\item Each step changes at most one vertex.  
\item $c_{2i+1}$ remains surjective: it differs from $c_{2i}$ only at $v_k$, which had a repeated color in $c_{2i}$.  
Thus all $|\ell| = 2^d-1$ colors still appear.  
\item In $c_{2i+1}$ the repeated color is now $c_{2i+1}(v_i)$, so changing $v_i$ in $c_{2i+2}$ also preserves surjectivity.  
\item Finally, $c_{2i+2}(v_i)=c_B(v_i)$ and for $j<i$ we still have $c_{2i+2}(v_j)=c_B(v_j)$ by the inductive hypothesis.  
Hence the invariant is advanced from $i$ to $i+1$.
\end{itemize}

\medskip
By induction this process terminates at $c_{2^{d+1}}=c_B$.  
Thus we obtain the desired sequence of surjective labelings.
\end{proof}

We now define a specific type of polychromatic coloring, called a \textit{repetitive} coloring, that we will use as the building blocks along the toast when constructing our final Borel coloring. 

\begin{definition}
     Let $G$ be a Borel graph induced by a free Borel action of $\Z^d = \langle e_0,\dots, e_{d-1}\rangle$. We say that a labeling of the vertices of $G$ is \emph{repetitive} if for every $i<d$, the labeling is $e_i^2$-invariant. Note that equivalently, these are the labelings which factor through to a quotient $\Z_2^d=(\Z/2\Z)^d$ of $\Z^d$ on each individual orbit. 
\end{definition}

Note that given a partial repetitive coloring of a region containing a cube, there is a unique repetitive extension of the coloring to $\Z^d$. This is because, since such colorings factor through $\Z_2^d$, they are exactly defined by their values on a cube.  

Given a set $K\subseteq\Z^d$ and $r\in\N$, we let $B_r(K)\subseteq\Z^d$ be the set of $x\in \Z^d$ whose distance to $K$ in the graph metric is at most $r$.

\begin{theorem}\label{thm:2.3}
    Let $G$ be a grid graph induced by a free Borel action of $\Z^d$. Then $G$ admits a Borel polychromatic coloring with $2^d-1$ colors.
\end{theorem}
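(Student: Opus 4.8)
The plan is to build the colouring by a Borel recursion along a Borel $r$-toast $\tau$ for $G$ (which exists by \cref{lem:toast}), with $r=r(d)$ a sufficiently large constant, processing toast pieces in order of \emph{rank} --- the rank of a piece being one more than the largest rank of a piece it properly contains, so minimal pieces have rank $0$ and every piece has finite rank (bounded by its size). Fix once and for all a Borel linear order on $X$ and a surjective ``pattern'' $c^{*}\colon\{0,1\}^{d}\to\ell$, and for a piece $K$ write $o_{K}$ for its $\le$-least point. Since the action is free it makes sense to call a labelling of a subset of an orbit \emph{repetitive} if it factors through the cosets of $2\Z^{d}$ in that orbit; fixing an origin $o$, such a labelling is a function of $(x-o)\bmod 2$, and it is polychromatic as soon as that function is surjective, since every cube meets each coset of $2\Z^{d}$ exactly once.

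The invariant maintained after a piece $K$ has been handled is: (i) the partial colouring is defined on all of $K$ and every cube contained in $K$ sees all $2^{d}-1$ colours; (ii) on the inner collar $\{x\in K: d_{G}(x,X\setminus K)\le d\}$ the colouring is repetitive with origin $o_{K}$ and pattern $c^{*}$. A minimal piece is coloured throughout by that repetitive labelling. For a piece $K$ with maximal proper subpieces $K_{1},\dots,K_{m}$ (finitely many, already coloured, pairwise $r$-apart and $r$-apart from $X\setminus K$), we keep the colouring on each $K_{i}$, colour every point of $K$ at $G$-distance more than $R:=d+(d+1)2^{d+1}$ from all the $K_{i}$ repetitively with origin $o_{K}$ and pattern $c^{*}$, and on each buffer $B_{R}(K_{i})\setminus K_{i}$ we \emph{interpolate}. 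This is the one place \cref{cubecoloring} enters: $K_{i}$'s collar colouring and the ``far'' colouring just prescribed are both repetitive with surjective patterns (clause (ii) for $K_{i}$, and $c^{*}$ read against $o_{K}$), so \cref{cubecoloring} gives a chain of $2^{d+1}$ surjective patterns $h_{0},\dots,h_{2^{d+1}}$ from the former to the latter whose consecutive terms differ at a single cube-vertex; we split $B_{R}(K_{i})\setminus K_{i}$ into concentric bands $B_{0},\dots,B_{2^{d+1}}$ by distance to $K_{i}$, $B_{0}$ of width $d$ and the rest of width $d+1$, and colour $B_{t}$ repetitively with origin $o_{K_{i}}$ and pattern $h_{t}$; since $h_{0}$ is $K_{i}$'s collar pattern and $h_{2^{d+1}}$ is the far pattern, everything glues. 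Taking $r$ large (e.g.\ $r>2d+(2d+2)2^{d+1}$) keeps these bands inside the buffer, disjoint from those of other $K_{j}$, and away from $\partial K$; in particular the inner collar of $K$ lies in the far region, giving (ii) for $K$.

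The crux --- and the only place the count is $2^{d}-1$ rather than $2^{d}$ --- is the local observation that if $\rho,\rho'$ are repetitive colourings whose patterns (against a common origin) differ at exactly one cube-vertex $w_{0}$, then for \emph{any} set $A$ the labelling equal to $\rho$ on $A$ and to $\rho'$ off $A$ makes every cube polychromatic: a cube meets $w_{0}$'s coset in exactly one vertex, every other vertex gets the common value there, and whichever of the two values the special vertex receives, surjectivity of $\rho$ (resp.\ $\rho'$) supplies the missing colour. Hence a cube meeting at most two consecutive bands $B_{t},B_{t+1}$ is polychromatic; and since a cube has $G$-diameter $\le d$ while the bands have width $\ge d$, every cube meets at most two consecutive bands. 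Every other cube in $K$ is contained in some $K_{i}$ (polychromatic by (i) for $K_{i}$), or straddles $\partial K_{i}$ --- where, because $B_{0}$ has width $d$ and carries $K_{i}$'s collar pattern, the cube lies entirely in $K_{i}\cup B_{0}$ and sees a single surjective repetitive colouring --- or lies far from every $K_{i}$, again seeing one surjective repetitive colouring; so (i) is preserved.

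Finally, every point of $X$ lies in arbitrarily large toast pieces (the pieces through a point exhaust its orbit, by connectedness of orbits together with $r\ge 1$), so each point's colour is fixed at a finite stage and never revised, and each cube is eventually engulfed by one piece, where (i) makes it polychromatic; the union over all stages is therefore a polychromatic $(2^{d}-1)$-colouring. It is Borel because the rank function, the basepoints $o_{K}$, the distances to the $K_{i}$, and the (deterministic) chains produced by \cref{cubecoloring} are all Borel, so each stage adds a Borel partial colouring. I expect the main obstacle to be the bookkeeping of origins: there is provably no Borel way to choose a coherent global parity, so every piece must carry its own origin, and one must verify that the interpolation genuinely connects repetitive colourings differing in origin as well as pattern --- which is exactly why \cref{cubecoloring} is invoked between $K_{i}$'s collar pattern and the suitably re-based copy of $c^{*}$.
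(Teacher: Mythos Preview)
Your proposal is correct and follows essentially the same approach as the paper's proof: build the colouring by Borel induction along a toast, colour each piece's exterior by a fixed repetitive surjective template rooted at a Borel-chosen basepoint, and use \cref{cubecoloring} to interpolate across concentric shells in the buffers around the inner subpieces. The only differences are cosmetic---your shell widths ($d$ for the innermost band and $d{+}1$ thereafter, versus the paper's uniform $2d$) and your explicit inner-collar invariant (ii), which the paper leaves implicit in its condition that the exterior region carries $c^{K}$---and your treatment of rebasing the outer pattern to the inner origin before invoking \cref{cubecoloring} is if anything more careful than the paper's.
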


 \begin{proof}[Proof of Theorem~\ref{thm:2.3}]
Fix $d\in\mathbb{N}$ and a free Borel action $\mathbb{Z}^d=\langle e_0,\dots,e_{d-1}\rangle\actson X$ with grid graph $G$.
Let $\ell$ be a color set with $|\ell|=2^d-1$ and 
choose any surjective labeling $a:\Z_2^d\to\ell$. 

\smallskip\noindent\emph{Repetitive templates.}
For any finite $K\in[X]^{<\infty}$, we can choose a ``root'' $r_K\in K$ in a Borel way. Given this root, we can define a map $\varphi_K: K \to \Z_2^d$ by taking entries mod $2$ starting at $r_K$, meaning that $\varphi_K(g\cdot r_K)=g\bmod 2$ for $g\in\Z^d$. 

The \emph{repetitive template} on $K$ is the labeling $c^K:K\to 
\ell $ given by starting at $r_K$ and coloring according to $a$ mod $2$. That is, we have $$c^K = a\circ \varphi_K .$$
Since $a$ is surjective, $c^{K}$ is a repetitive $(2^d\!-\!1)$-polychromatic coloring on $K$.

\smallskip\noindent\emph{Numerical parameters.}
By Lemma~\ref{cubecoloring}, any two surjective labelings of $Q$ can be joined by a sequence of length
at most $2^{d+1}$ with single-vertex changes. Set
\[
R:= 2^{d+2}\cdot d,\qquad r:= (2^{d+3}+1)\cdot d.
\]
Take a Borel $r$-toast $\tau$ for $G$ (Lemma~\ref{lem:toast}). The choice $r= 2R+d$ ensures:
for any toast pieces $L\neq L'$ with $B_r(L),B_r(L')\subseteq K$ we have
$\mathrm{dist}(L,L')\ge r$, hence the $R$-thickenings $B_R(L),B_R(L')$ are disjoint and moreover more than $d$ apart;
also $B_R(L)\subseteq K$ (since $r>R$).

\smallskip\noindent\emph{Induction over toast rank.}
Partially order $\tau$ by inclusion; since pieces are finite and the $r$-toast is nested/disjoint, this is well-founded.
We construct Borel colorings $c\!\restriction K$ for all $K\in\tau$ by induction so that:

\begin{itemize}
\item[(I1)] $c\!\restriction K$ is $(2^d\!-\!1)$-polychromatic on $K$;
\item[(I2)] if $L\subseteq K$ is an internal toast piece, then $c\!\restriction L$ is already defined and
$c\!\restriction L$ is preserved when extending to $K$;
\item[(I3)] every unit cube $C\subseteq K$ meets either the ``exterior'' region (where $c=c^{K}$) or exactly one
``gap'' around a single internal piece $L\subseteq K$ and, inside that gap, $C$ intersects at most two consecutive shells. This will be defined below. 
\end{itemize}

\smallskip\noindent\emph{Basis.}
If $K$ is a $\subseteq$–minimal toast piece in $\tau$, set $c\!\restriction K:=c^{K}$. Then (I1)--(I3) are immediate.

\smallskip\noindent\emph{Inductive step.}
Fix $K\in\tau$ and list its maximal internal pieces as $L_1,\dots,L_m$.
By induction, $c$ is defined on $L_i$ for each $i$.
Work in the embedded copy $\varphi_K(K)\subseteq\mathbb{Z}^d$.

\medskip\noindent
\textbf{Step A: exterior.}
Define the exterior region
\[
E_K:=K\setminus \bigcup_{i=1}^m B_R(L_i).
\]
Color $E_K$ by the repetitive template: $c(x):=c^{K}(x)$ for $x\in E_K$.

\medskip\noindent
\textbf{Step B: disjoint gaps.}
For each $i$, define the ``gap'' around $L_i$, denoted $P_i$ by:
\[
P_i:=B_R(L_i)\setminus L_i.
\]
By the choice of $r$, the $P_i$ are pairwise disjoint and contained in $K$. Moreover, each unit hypercube $C=\{0,1\}^d\cdot x$ in the same $G$-connected component $[K]_G$ as $K$ can intersect at most one of $B_R(L_1),\ldots,B_R(L_m)$, and $[K]_G\setminus K$. 
On each gap $P_i$ independently, we connect the coloring of $L_i$ with the coloring $c_K$ on the exterior.

\medskip\noindent
\textbf{Step C: connecting the gaps.}
Fix $i$.
Consider the two repetitive polychromatic colorings on $K$:
the \emph{inner} repetitive template $c^{L_i}$ we have from when we defined the labeling on $L_i$,  and the \emph{outer} template $c^{K}$.
We can view these as functions on $\Z_2^d$, since they are repetitive. Hence by Lemma~\ref{cubecoloring} there exists a sequence of surjective labelings of $\Z_2^d$, 
\[
c^{L_i}\circ\varphi_K^{-1}=a_0,\,a_1,\,\dots,\,a_{2^{d+1}}=a=c^K\circ\varphi_K^{-1}
\]
with at most single-vertex changes between each step.
We can extend each $a_t$ to a repetitive coloring on $K$ via $\varphi_K$. 

Partition $P_i$ into shells of width $2d$ around $L_i$:
\[
S_{i,t}:=\big\{x\in P_i:\ 2dt\le \mathrm{dist}(x,L_i)<2d(t+1)\big\},\qquad t=0,1,\dots,2^{d+1}.
\]

Define $c(x)$ by coloring the $t$-th shell by $a_t$: $c(x):=a_t\circ \varphi_K(x)$ for $x\in S_{i,t}$.
This fixes $c$ on $P_i$ and, together with Step A and the inductive coloring on $L_i$, defines $c$ on $K$.

\smallskip\noindent\emph{Verification.}
Because the graph metric distance changes by at most $d$ across unit hypercubes, every unit cube $C\subseteq K$ is contained either in $E_K$
or in $S_{i,t}\cup S_{i,t+1}$ for a \emph{unique} $i$ and some $t$, by disjointness of the $P_i$. 
On $E_K$, $c=c^{K}$ is $(2^d\!-\!1)$-polychromatic.
On $S_{i,t}\cup S_{i,t+1}$ we have $\tilde c_t=\varphi_K\circ a_t$ and $\tilde c_{t+1}=\varphi_K \circ a_{t+1}$; by construction the two differ on at most one
vertex of any unit cube, so $c\!\restriction C$ equals either $\tilde c_t\!\restriction C$ or $\tilde c_{t+1}\!\restriction C$,
both of which are surjective onto $\ell$.
Thus (I1)--(I3) hold for $K$, and $c$ extends the inductive coloring on the $L_i$'s, proving (I2).

\smallskip
Since the construction is Borel at each stage and proceeds by induction over a Borel well-founded order on $\tau$,
the resulting global $c:X\to\ell$ is Borel.
Finally, every unit cube of $G$ is contained in some toast piece, so by the verification above $c$ is
$(2^d\!-\!1)$-polychromatic on $G$.
\end{proof}

In the next section we establish obstructions to achieving a full $2^d$-polychromatic coloring, thus showing that \cref{thm:2.3} is best possible.  

\section{Characterizing $2^d$-colorings}

In this section we analyze when a Borel $2^d$-polychromatic coloring of a grid graph can exist. 
The guiding principle is that such a coloring is extremely rigid: every unit hypercube must contain all $2^d$ colors, which forces each hypercube to be colored injectively. 
This constraint links $2^d$-polychromatic colorings to families of Borel $2$-colorings of the individual $\Z$-subactions of $\Z^d$. 
We show that the existence of a $2^d$-polychromatic coloring implies the existence of compatible Borel $2$-colorings along each $\Z$-line, and then we investigate to what extent the converse holds. 
This leads to a precise characterization of $2^d$-colorings.

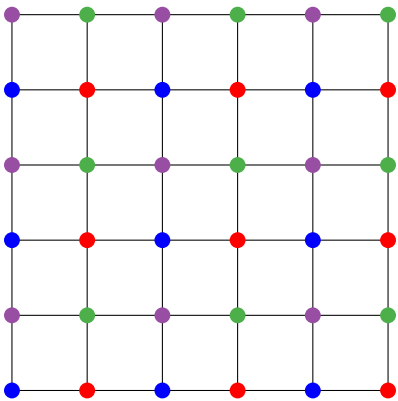
\begin{figure}[h]\label{fig:4polygrid}
\centering
\begin{tikzpicture}[scale=1]
  \foreach \i in {0,...,5} {
    \draw (0,\i) -- (5,\i);
    \draw (\i,0) -- (\i,5);
  }

  \definecolor{cPurple}{RGB}{152,78,163}
  \definecolor{cGreen}{RGB}{77,175,74}

  \foreach \y in {0,...,5}{
    \pgfmathtruncatemacro{\rowParity}{mod(\y,2)} 
    \foreach \x in {0,...,5}{
      \pgfmathtruncatemacro{\colParity}{mod(\x,2)} 
      \ifnum\rowParity=0
        \ifnum\colParity=0
          \fill[blue] (\x,\y) circle (3pt);
        \else
          \fill[red] (\x,\y) circle (3pt);
        \fi
      \else
        \ifnum\colParity=0
          \fill[cPurple] (\x,\y) circle (3pt);
        \else
          \fill[cGreen] (\x,\y) circle (3pt);
        \fi
      \fi
    }
  }
\end{tikzpicture}
\caption{A $4$-polychromatic coloring of a segment of the grid.}
\end{figure}

\subsection*{Example: Classical polychromatic number of the grid}

Before turning to the Borel setting, it is helpful to recall the classical case. 
Consider the infinite planar grid graph, and let $\mathcal{S}$ be the collection of its unit squares (faces). 
A coloring $c:V(G)\to[k]$ is $k$-polychromatic if every unit square contains all $k$ colors. 
It is easy to see that $\chi^p(G,\mathcal{S})=4$: each unit square has four vertices, so at most four colors can appear, and this bound is sharp since one can color the grid so that every square contains all four colors, as shown in \cref{Ex:classical-grid}.

\begin{example}\label{Ex:classical-grid}
Figure 1 shows a $4$-polychromatic coloring of a finite segment of the grid graph. 
Every unit square (face) contains each of the four colors.
\end{example}

In this example, we have a $2$-coloring of the Schreier graphs of the $\Z$ sub-actions defined by the generators. We will see in the next section, that given a $4$-polychromatic coloring, this will always be the case.

\subsection*{Impossibility of Borel $2^d$-polychromatic colorings }
First, it is helpful to understand how obstructions to polychromatic colorings arise.  
In particular, the existence of a $2^d$-polychromatic coloring on a $\Z^d$-grid is tied to the ability to Borel $2$-color the one-dimensional $\Z$-subactions.  In fact, if a single generator of the action already fails to admit a Borel proper $2$-coloring, as in the classical example of an irrational rotation of the circle, then this is already an obstruction to producing a $2^d$-polychromatic coloring.  
The following lemma makes this precise by showing that any obstruction to $2$-colorability of a $\Z$-subaction propagates to obstruct the existence of a Borel $2^d$-polychromatic coloring on the whole grid.

\begin{lemma}\label{lem:no-poly}
Let $\mathbb{Z}^d\actson X$ be a free Borel action on a standard Borel space with associated grid graph $G$.  
If at least one generator induces a free Borel $\mathbb{Z}$–subaction of $X$ without a Borel proper $2$-coloring,  
then $G$ does not admit a Borel $2^d$-polychromatic coloring.
\end{lemma}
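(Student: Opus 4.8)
The plan is to extract from a hypothetical Borel $2^d$-polychromatic coloring $c : X \to [2^d]$ a Borel proper $2$-coloring of the offending $\mathbb{Z}$-subaction, contradicting the hypothesis. The first observation is that polychromaticity at the level $2^d$ is maximally rigid: a unit hypercube has exactly $2^d$ vertices, so if it must contain all $2^d$ colors, then $c$ restricted to each cube $\{0,1\}^d \cdot x$ is a \emph{bijection} onto $[2^d]$. In particular, for any two vertices $y, z$ of a common unit cube we have $c(y) \neq c(z)$; equivalently, $c$ is a proper coloring of the ``king graph'' $G^+$ on $X$ whose edges connect $x$ to $v \cdot x$ for every nonzero $v \in \{-1,0,1\}^d$.

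Next I would exploit this bijectivity to see that moving one step in a single generator direction permutes colors in a rigid way. Fix a generator, say $e_0$; write $\sigma = e_0$ for the corresponding Borel automorphism. Consider any $x$ and the cube $C$ spanned at $x$ versus the cube $C'$ spanned at $\sigma \cdot x$: these two cubes share the $(d-1)$-dimensional face $F = \{0\}\times\{0,1\}^{d-1}\cdot x$. Since $c$ is a bijection on $C$ and on $C'$, and both restrict to bijections on $F$ onto the same $2^{d-1}$-element set $c(F)$, the ``opposite'' faces $\{1\}\times\{0,1\}^{d-1}\cdot x$ (the far face of $C$) and $\{2\}\times\{0,1\}^{d-1}\cdot x$ (the far face of $C'$, which equals $e_0 \cdot (\text{far face of } C)$ shifted) must both receive the complementary color set $[2^d]\setminus c(F)$. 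Pushing this comparison along, one shows that the color of $\sigma^2 \cdot x$ is forced by the colors of $x, \sigma\cdot x$ and the surrounding data; the cleanest extraction is that the map $x \mapsto (c(x), c(\sigma\cdot x))$ takes finitely many values and, because of the king-graph properness restricted to the line $\{\sigma^n\cdot x\}$, no color can repeat at distance $1$ or $2$ along the line. So define $b : X \to [2^d]\times[2^d]$ by $b(x) = (c(x), c(\sigma \cdot x))$; this is Borel, $\sigma$-equivariant in the obvious shift sense, and distinguishes $x$ from $\sigma\cdot x$. From a Borel coloring of a $\mathbb{Z}$-action in which adjacent points get different colors, one gets a Borel proper $2$-coloring: the $\mathbb{Z}$-line with the edge relation ``differ by $1$'' is a disjoint union of bi-infinite paths, and any Borel proper finite coloring of a bi-infinite path Borel-refines to a proper $2$-coloring (e.g. by a standard argument using a Borel maximal independent set in the square of the line graph, or by the fact that $\mathbb{Z}$-actions have Borel proper $3$-colorings and one does a local fix). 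Actually the quickest route: the relation ``$c$ differs along $\sigma$'' already says $c$ itself, post-composed with \emph{any} fixed bijection $[2^d]\to[2^d]$, together with the line structure, gives a Borel proper coloring of the line graph of the $\mathbb{Z}$-subaction with $2^d$ colors; and a $\mathbb{Z}$-action always admits a Borel proper $2$-coloring \emph{iff} it has no obstruction — but the point is we need to produce one, so I will instead argue that the existence of \emph{any} Borel proper coloring of the line is not automatic, yet here we are handed more.

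Let me instead streamline the final step. The key is to show directly that $c$ yields a Borel proper $2$-coloring of the $e_0$-line graph. From bijectivity on cubes, consider the restriction of $c$ to a single $e_0$-orbit-line $\ell_x = \{e_0^n\cdot x : n\in\mathbb{Z}\}$, together with one transverse neighbor to pin down the face color sets. By the face-matching argument above, the unordered pair $\{c(e_0^n\cdot x)\}\cup c(\text{transverse face at }e_0^n\cdot x)$ alternates between a fixed $2^{d-1}$-set $A$ and its complement $[2^d]\setminus A$ as $n$ varies. Hence the parity of $n$ is recovered \emph{locally and Borel-ly} from the data around $e_0^n\cdot x$: $e_0^n\cdot x$ lies in an ``$A$-face'' iff $n$ is even. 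Coloring $X$ by this parity is a Borel function $X\to\{0,1\}$ that is anti-invariant under $e_0$, i.e. a Borel proper $2$-coloring of the $e_0$-line graph. The main obstacle is making the choice of transverse face and reference set $A$ coherent and Borel across the whole orbit line without circularity — the resolution is that we do not need a global coherent $A$; it suffices that \emph{within each line} the two alternating classes are canonically distinguished by whichever of the two the cube face at the root lies in, and this can be read off Borel-measurably by, say, using a Borel linear order or a Borel selector on lines; alternatively, one avoids selectors entirely by noting the coloring $x \mapsto \bigl(c(x), \{c(v\cdot x): v\in\{0,1\}^{d-1}, \text{transverse}\}\bigr)$ is itself a finite Borel coloring anti-invariant under $e_0$, and then invoke that any Borel finite coloring of a free Borel $\mathbb{Z}$-action which is anti-invariant under the generator can be pushed to a $2$-coloring by reading off, in each component, the induced $2$-coloring of the resulting bi-infinite path (a path has a canonical proper $2$-coloring up to swapping the two colors, and which swap to use is itself determined Borel-measurably by the finite-coloring data at, e.g., the vertex of least color value in a fundamental domain). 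This contradicts the assumption that the $e_0$-subaction has no Borel proper $2$-coloring, completing the proof; the symmetric argument works for whichever generator is the bad one.
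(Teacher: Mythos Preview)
Your core argument matches the paper's: from a $2^d$-polychromatic coloring $c$, each unit cube is colored bijectively, so with $C_0=\{0\}\times\{0,1\}^{d-1}$ the face-color-sets $c(C_0\cdot x)$ and $c(e_0 C_0\cdot x)$ partition the color set, and comparing the cubes at $x$ and $e_0\cdot x$ gives $c(C_0\cdot e_0 x)=\ell\setminus c(C_0\cdot x)$. So along each $e_0$-line the face-color-set alternates between some $A$ and its complement. This is exactly the paper's observation.

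The gap is in your final extraction step. You assert that ``any Borel proper finite coloring of a bi-infinite path Borel-refines to a proper $2$-coloring'' and later that ``any Borel finite coloring of a free Borel $\mathbb{Z}$-action which is anti-invariant under the generator can be pushed to a $2$-coloring''; both are false---an irrational rotation of the circle has Borel proper $3$-colorings but no Borel proper $2$-coloring, which is exactly the obstruction this lemma is meant to exploit. Your appeals to a ``fundamental domain'' or a ``Borel selector on lines'' fail for the same reason: free ergodic $\mathbb{Z}$-actions have no Borel transversal. The fix, which is what the paper does, is far simpler than any of your workarounds and uses the extra structure you already isolated: the map $x\mapsto c(C_0\cdot x)$ does not merely satisfy $c(C_0\cdot e_0 x)\neq c(C_0\cdot x)$, it satisfies the involution identity $c(C_0\cdot e_0 x)=\ell\setminus c(C_0\cdot x)$. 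So fix any single color $i\in\ell$ and set $c'(x)=1$ iff $i\in c(C_0\cdot x)$. Since $i$ lies in exactly one of $A$ and $\ell\setminus A$, this is a Borel map $X\to\{0,1\}$ with $c'(e_0\cdot x)\neq c'(x)$, i.e.\ the desired Borel proper $2$-coloring of the $e_0$-subaction---no selectors, linear orders, or refinement arguments needed.
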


For instance, if $\mathbb{Z}^d$ acts on the torus $\mathbb{R}/\mathbb{Z}$ by $d$-many $\mathbb{Q}$–linearly independent irrational rotations, then no Borel $2^d$-polychromatic coloring exists.

\begin{proof}
We will prove the contrapositive: We will show that given a $2^d$-polychromatic coloring, we get a $2$-coloring of  any $\Z$-subaction by a generator of $\Z^d$.  Write $\mathbb{Z}^d=\langle e_0,\dots,e_{d-1}\rangle$. Without loss of generality, we will show this for the action of $e_0$. 
Consider the unit $d$–cube
\[
C:=\{0,1\}^d\subseteq\mathbb{Z}^d,
\]
and let $C_0\subseteq C$ be the $(d-1)$–dimensional face obtained by setting the $e_0$–coordinate to be $0$.  
Then
\[
C = C_0 \ \sqcup\ e_0\cdot C_0,
\]
so each $d$–cube naturally splits into two parallel $(d-1)$–faces.

\medskip
Suppose that $c:X\to\ell$ is a Borel $2^d$-polychromatic labeling, with $|\ell|=2^d$.  
For each $x\in X$, the cube $C\cdot x$ has $2^d$ vertices and must see all $2^d$ colors.  
Thus $c$ is injective on $C\cdot x$, and in particular
\[
c(C_0\cdot x)\ \sqcup\ c(e_0 C_0\cdot x) \ =\ \ell.
\]
Now compare the two adjacent cubes $C\cdot x$ and $C\cdot (e_0\cdot x)$.  
Both contain the face $e_0 C_0\cdot x$, so the above identity implies
\[
c(C_0\cdot x)=\ell\setminus c(e_0 C_0\cdot x)=c(e_0^2 C_0\cdot x).
\]

\medskip
Fix a color $i\in\ell$, and define a map $c':X\to\{0,1\}$ by
\[
c'(x)=
\begin{cases}
1 &\text{if } i\in c(C_0\cdot x),\\
0 &\text{otherwise}.
\end{cases}
\]
This function is Borel, since membership in $c(C_0\cdot x)$ is a Borel property.

\medskip
Finally, observe that $c'$ is a proper $2$-coloring of the graph induced by the $e_0$–action:  
if $x$ and $e_0\cdot x$ are adjacent, then
\[
c'(e_0\cdot x)=1 \iff i\in c(C_0\cdot e_0 x)
= c(e_0^2 C_0\cdot x),
\]
which is the complement of $c(C_0\cdot x)$ with respect to $\ell$.  
Hence $c'(e_0\cdot x)\neq c'(x)$ for all $x$.  
This gives a Borel proper $2$-coloring of the $e_0$-subgraph.
\end{proof}
 
The obstruction in Lemma~\ref{lem:no-poly} immediately yields the following corollary. 

\begin{cor}\label{cor:1.1}
If $\mathbb{Z}^d=\langle e_0,\ldots,e_{d-1}\rangle\actson X$ is a free Borel action whose grid graph $G$ admits a Borel $2^d$-polychromatic coloring, then for each $i< d$ there is a Borel proper $2$-coloring $c_i$ of the $i$-th $\mathbb{Z}$-subaction $\langle e_i\rangle\actson X$.
\end{cor}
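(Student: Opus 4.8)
The plan is to extract the corollary directly from the proof of Lemma~\ref{lem:no-poly}, which already does all the work for a single generator. That argument starts from a Borel $2^d$-polychromatic coloring $c\colon X\to\ell$ and produces a Borel proper $2$-coloring of the $\langle e_0\rangle$-subaction; the index $0$ played no special role beyond selecting one coordinate of the unit cube. So first I would fix an arbitrary $i<d$ and repeat that construction verbatim with $e_i$ in place of $e_0$: write the unit cube $C=\{0,1\}^d$ as $C=C_i\sqcup e_i\cdot C_i$, where $C_i$ is the face on which the $e_i$-coordinate vanishes; note that polychromaticity forces $c$ to be injective on each $C\cdot x$, so $c(C_i\cdot x)\sqcup c(e_i C_i\cdot x)=\ell$; compare $C\cdot x$ with $C\cdot(e_i\cdot x)$, which share the face $e_iC_i\cdot x$, to obtain $c(C_i\cdot x)=\ell\setminus c(e_iC_i\cdot x)=c(e_i^2 C_i\cdot x)$; then fix a color $j\in\ell$ and set $c_i(x)=1$ if $j\in c(C_i\cdot x)$ and $c_i(x)=0$ otherwise. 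Exactly as in the lemma, $c_i$ is Borel, because ``$j\in c(C_i\cdot x)$'' is a Borel condition on $x$, and it is a proper $2$-coloring of the Schreier graph of $\langle e_i\rangle\actson X$ since $c_i(e_i\cdot x)$ records membership of $j$ in $c(e_i^2C_i\cdot x)=\ell\setminus c(C_i\cdot x)$, which differs from $c_i(x)$.

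Running this for each $i<d$ yields the promised family $c_0,\dots,c_{d-1}$. I do not expect any obstacle here: there is genuinely no new mathematical content beyond Lemma~\ref{lem:no-poly}, and since the corollary only asks for each $c_i$ to be individually Borel — it makes no compatibility claim among the $c_i$ — it suffices that each is produced by the same uniform Borel recipe, with no joint construction needed. The only point worth stating explicitly is that the ``without loss of generality'' in the lemma's proof is literally a relabeling of the generators of $\Z^d$, so the same computation goes through for every coordinate direction; hence the corollary is immediate from (the contrapositive form of) Lemma~\ref{lem:no-poly} applied once per generator.
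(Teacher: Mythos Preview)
Your proposal is correct and matches the paper's approach: the paper simply states that the corollary is immediate from Lemma~\ref{lem:no-poly}, and your write-up makes explicit exactly what that means---relabel the distinguished generator and run the same argument for each $i<d$. There is nothing more to add.
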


Corollary~\ref{cor:1.1} says that the existence of a $2^d$-polychromatic coloring automatically produces compatible $2$-colorings along each one-dimensional direction.  
The next question is: how much extra symmetry or invariance do these $2$-colorings have, and when is that enough to recover a full $2^d$-polychromatic coloring?  
To capture this, we introduce two notions.

\begin{definition}
Let $\mathbb{Z}^d=\langle e_0,\ldots,e_{d-1}\rangle\actson X$ be a free action.  
A proper $2$-coloring $c_i:X\to\{0,1\}$ of the $i$th $\mathbb{Z}$-subaction is called \emph{orthogonally invariant} in direction $i$ if it is unchanged under the other generators, i.e.
\[
c_i(e_j\cdot x)=c_i(x)\quad\text{for all }x\in X,\ j\neq i.
\]
\end{definition}

Figure~2 illustrates an orthogonally invariant $2$-coloring for the vertical direction in $\mathbb{Z}^2$.

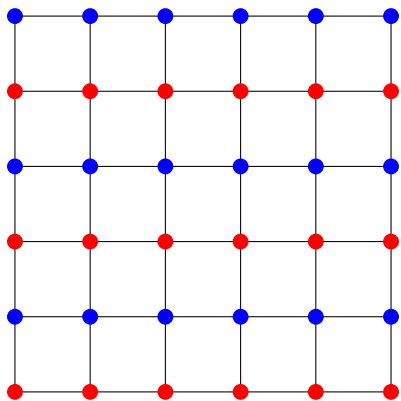
\begin{figure}[h]\label{fig:ortho}
\centering
\begin{tikzpicture}[scale=1]
  \foreach \i in {0,...,5} {
    \draw (0,\i) -- (5,\i);
    \draw (\i,0) -- (\i,5);
  }

  \definecolor{cPurple}{RGB}{152,78,163}
  \definecolor{cGreen}{RGB}{77,175,74}

  \foreach \y in {0,...,5}{
    \pgfmathtruncatemacro{\rowParity}{mod(\y,2)} 
    \foreach \x in {0,...,5}{
        \ifnum\rowParity=0
          \fill[red] (\x,\y) circle (3pt);
        \else
          \fill[blue] (\x,\y) circle (3pt);
      \fi
    }
  }
\end{tikzpicture}

\caption{An orthogonally invariant proper $2$-coloring for the vertical $\Z$-subaction. }
\end{figure}

\begin{definition}
Let $(c_0,\ldots,c_{d-1})$ be a $d$-tuple of proper $2$-colorings, one for each $\mathbb{Z}$-subaction $\langle e_i\rangle\actson X$.  
We say that this tuple is \emph{$n$-fold invariant} if on every orbit $[x]_{\mathbb{Z}^d}$, there are at least $n$-many indices $i$ such that $c_i$ is orthogonally invariant on $[x]_{\mathbb{Z}^d}$.  
\end{definition}

Thus $0$-fold invariance is the minimal structure guaranteed by Corollary~\ref{cor:1.1}, while $(d-1)$-fold invariance says that in all the other coordinate directions except possibly one, the coloring is orthogonally invariant.

With this definition in hand, our previous Corollary \ref{cor:1.1} now says that if $G$ is a grid graph of a free Borel $\Z^d$-action, then the existence of a Borel $2^d$-polychromatic coloring implies the existence of a $0$-fold invariant $d$-tuple of Borel $2$-colorings. We collect and then strengthen this observation in the theorem below. 

\begin{theorem}\label{thm:1.4}
    Fix a free Borel action $\Z^d=\langle e_0,\ldots,e_{d-1}\rangle\actson X$ and its corresponding grid graph $G$. The following hold. 
    \begin{enumerate}
        \item[(0)] If $G$ admits a Borel $2^d$-polychromatic coloring, then $G$ admits a $0$-fold invariant $d$-tuple of Borel $2$-colorings.
        \item[(1)] If $G$ admits a $(d-1)$-fold invariant $d$-tuple of Borel $2$-colorings, then $G$ admits a Borel $2^d$-polychromatic coloring. 
        \item[(2)]
        When $d=2$, $G$ admits a Borel $4$-polychromatic coloring if and only if $G$ admits a $1$-fold invariant $2$-tuple of Borel $2$-colorings. 
    \end{enumerate}
\end{theorem}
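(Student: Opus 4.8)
The plan is to prove all three parts of Theorem~\ref{thm:1.4}, with the main content being parts (1) and (2); part (0) is already essentially Corollary~\ref{cor:1.1}, so I would just restate that a $2^d$-polychromatic coloring gives, via Lemma~\ref{lem:no-poly}, a proper Borel $2$-coloring on each $\langle e_i\rangle$-subaction, and such a $d$-tuple is vacuously $0$-fold invariant.

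For part (1), suppose we have a $d$-tuple $(c_0,\dots,c_{d-1})$ of Borel proper $2$-colorings that is $(d-1)$-fold invariant: on every $\mathbb{Z}^d$-orbit, at least $d-1$ of the $c_i$ are orthogonally invariant on that orbit. The idea is to define a candidate $2^d$-polychromatic coloring $c(x) := (c_0(x), c_1(x), \dots, c_{d-1}(x)) \in \{0,1\}^d$, identifying $\{0,1\}^d$ with a palette of $2^d$ colors. This is manifestly Borel. I need to check that on every orbit, every unit $d$-cube $\{0,1\}^d \cdot x$ receives all $2^d$ colors, i.e. $g \mapsto c(g \cdot x)$ restricted to $g \in \{0,1\}^d$ is a bijection onto $\{0,1\}^d$. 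Fix an orbit and let $j$ be the (at least one) index that is possibly \emph{not} orthogonally invariant; all $c_i$ with $i \neq j$ are orthogonally invariant on this orbit. Then for $i \neq j$, the value $c_i(g\cdot x)$ depends only on the $e_i$-coordinate of $g$ and flips each time that coordinate changes by one (since $c_i$ is a proper $2$-coloring of the $\langle e_i\rangle$-line and is constant in every other direction). For the coordinate $j$: on the sub-line $\{g : g_i = \varepsilon_i \text{ fixed for } i\neq j\}$, $c_j$ is a proper $2$-coloring of a $\mathbb{Z}$-line, hence alternates, so as $g_j$ ranges over $\{0,1\}$ we see both values $0$ and $1$. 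Combining: the map $\{0,1\}^d \to \{0,1\}^d$ sending $g \mapsto c(g\cdot x)$ has, in each of the $d$ coordinate slots, a component that is either $g_i \mapsto g_i \oplus b_i$ (for $i \neq j$, some orbit-dependent bit $b_i$) or, in slot $j$, a function of $g$ that restricted to each line parallel to $e_j$ is a bijection onto $\{0,1\}$. A clean way to finish: the map is affine-like in the $i \neq j$ coordinates and a "pairing bijection" in the $j$-th, so it is a bijection — concretely, for any target $t \in \{0,1\}^d$ there is a unique choice of $g_i$ ($i\neq j$) making coordinates $i\neq j$ correct, and then a unique $g_j$ making coordinate $j$ correct, using that $c_j$ alternates along that particular $e_j$-line.

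For part (2) with $d=2$: the "if" direction is just part (1) since $(d-1) = 1$ here. The "only if" direction is the genuinely new content, and this is where I expect the main obstacle. We are given a Borel $4$-polychromatic coloring $c$ of the $\mathbb{Z}^2$-grid and must produce a $1$-fold invariant $2$-tuple of Borel $2$-colorings, i.e. Borel proper $2$-colorings $c_0$ of $\langle e_0\rangle$ and $c_1$ of $\langle e_1\rangle$ such that on every orbit at least one of them is orthogonally invariant. From the proof of Lemma~\ref{lem:no-poly} we already get, from the identity $c(C_0 \cdot x) = c(e_0^2 C_0 \cdot x)$, that the "column color-pair" $c(C_0\cdot x)$ (an unordered pair, or equivalently a $2$-element subset of the $4$-color palette) is $e_0^2$-invariant; symmetrically the "row color-pair" is $e_1^2$-invariant. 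The key structural observation is that since each unit square is colored injectively by all $4$ colors, the partition of the palette $\ell$ (with $|\ell|=4$) into the two vertical-edge pairs of a square must be one fixed partition into two pairs, OR it varies — and analyzing how the vertical pair $\{c(x), c(e_1 x)\}$ and horizontal pair $\{c(x), c(e_0 x)\}$ interact around a square will force, on each orbit, that either all vertical edges carry the same unordered pair (then defining $c_0(x) = $ "which element of that fixed pair is $c(x)$'s... " — more precisely $c_0$ reads off which side of the pair-partition $c(x)$ lies on, and this is $e_1$-invariant, i.e. orthogonally invariant in direction $0$), or all horizontal edges do (symmetric). The delicate part is showing this dichotomy holds \emph{orbit by orbit} in a Borel way and that the resulting $c_i$ are genuinely proper $2$-colorings of the right subactions; I would argue it by a short case analysis on a single square $\{0,1\}^2 \cdot x$, tracking which of the three partitions of $\ell$ into two pairs is realized by the two vertical edges versus the two horizontal edges, propagating via the $e_i^2$-invariance already established, and using freeness to rule out degenerate identifications. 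The Borelness throughout is automatic since everything is defined from $c$ by finitely many applications of the action and Boolean operations.
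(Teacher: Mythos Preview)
Your handling of parts (0), (1), and the ``if'' direction of (2) is correct and matches the paper's approach exactly; in fact your bijection argument for (1) is more detailed than the one-line justification the paper gives.

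The gap is in the ``only if'' direction of (2). Your dichotomy---that on each orbit either the vertical pair-partition or the horizontal pair-partition is constant---is the right target, and it is equivalent to what the paper actually proves (namely that $c$ restricted to each orbit is either $e_0^2$-invariant or $e_1^2$-invariant). But your proposed method, ``a short case analysis on a single square \ldots\ propagating via the $e_i^2$-invariance already established,'' substantially underestimates what is needed. Local information at one square does not suffice: the paper assumes for contradiction that neither invariance holds on some orbit, then (i) locates for each direction $i$ a point $y_i$ where three consecutive vertices along $e_i$ carry three \emph{distinct} colors, (ii) runs a forward-backward induction in the orthogonal direction $e_{1-i}$ showing this three-color pattern forces a specific alternation $c(e_{1-i}^{n+1}y_i)=c(e_{1-i}^n e_i^2 y_i)\ne c(e_{1-i}^n y_i)$ for all $n$, and (iii) uses that $y_0,y_1$ lie in the same orbit to intersect these two propagated families of constraints and obtain a contradiction. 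Step (ii) is the heart of the argument and is not a single-square case check; you have not identified it.

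A secondary issue: your $c_0,c_1$ are defined orbit-by-orbit depending on \emph{which} partition is constant there, and you would then need to patch these into a single global Borel pair, with $c_i$ defined also on orbits where you chose the other direction. The paper sidesteps this by fixing $c_0,c_1$ uniformly in advance (via the recipe $c_i(x)=1$ iff $0\in\{c(x),c(e_{1-i}x)\}$) and then proving that \emph{this particular pair} is $1$-fold invariant once the dichotomy on $c$ is established. This is cleaner and you should adopt it.
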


\begin{proof}
    Part (0) is exactly Corollary~\ref{cor:1.1}.

For (1), suppose $(c_0,\ldots,c_{d-1})$ is a $(d-1)$-fold invariant $d$-tuple of $2$-colorings.  
Define
\[
c:X\to\mathbb{Z}_2^d,\qquad c(x)=(c_0(x),\ldots,c_{d-1}(x)).
\]
Because all but possibly one of the coordinates are orthogonally invariant, each cube $\{0,1\}^d\cdot x$ sees all $2^d$ distinct values of $c$, so $c$ is a Borel $2^d$-polychromatic coloring.

For (2), one direction is already covered: a $1$-fold invariant pair of $2$-colorings yields a $4$-polychromatic coloring by the same construction as (1).  
Conversely, assume $c:X\to\{0,1,2,3\}$ is a $4$-polychromatic coloring for a $\mathbb{Z}^2$–action with generators $e_0,e_1$.  
By Corollary~\ref{cor:1.1}, we can extract two associated Borel $2$-colorings $c_0,c_1$. We will check that these are $1$-fold invariant. 

    Assume $c:X\to\{0,1,2,3\}$ is a Borel $4$-polychromatic coloring and let $\Z^2=\langle e_0,e_1\rangle$. Recall that we defined the $2$-tuple of Borel $2$-colorings $c_0,c_1:X\to\{0,1\}$ such that for $i\in\{0,1\}$ and $x\in X$, $c_i(x)=1$ if and only if $0\in\{c(x),c(e_{1-i}x)\}$.

    We claim that for every $x\in X$, the coloring $c\restriction[x]_{\Z^2}$ is either $e_0^2$-invariant or $e_1^2$-invariant. Once we know this, it is then easy to see that if $c\restriction[x]_{\Z^2}$ is $e_i^2$-invariant for some $i\in\{0,1\}$, then $c_{1-i}$ is $e_i$-invariant, and we witness the conclusion that $(c_0,c_1)$ is $1$-fold invariant. 

    From now on, we fix some $x\in X$ such that $c\restriction[x]_{\Z^2}$ is neither $e_0^2$-invariant nor $e_1^2$-invariant and work towards a contradiction. Note that for $i\in\{0,1\}$, since $c$ itself is a proper coloring without monochromatic $e_i$-edges, if for some $y\in X$, the image $c(\langle e_i\rangle\cdot y)$ contains only $2$ out of the $4$ possible colors $\{0,1,2,3\}$, then $c\restriction\langle e_i\rangle\cdot y$ is $e_i^2$-invariant. Therefore, since $c\restriction[x]_{\Z^2}$ is not $e_i^2$-invariant, there are elements $y_0,y_1\in[x]_{\Z^2}$ such that $c([y_0]_{\langle e_0\rangle}),c([y_1]_{\langle e_1\rangle})\subseteq\{0,1,2,3\}$ both contain $3$ or more colors. 

    For $i\in\{0,1\}$, if for all $n\in\Z$, $\{c(e_i^{n}y_i),c(e_i^{n+1}y_i),c(e_i^{n+2}y_i)\}$ contains only $2$ out of the $4$ colors $\{0,1,2,3\}$, then since $c$ is a proper coloring without monochromatic $e_i$-edges, these triplets must all contain the same $2$ colors, and $c([y_i]_{\langle e_i\rangle})$ contains only $2$ colors, which contradicts our assumption about $y_i$. Therefore there exists some $n=n_i\in\Z$ for which $\{c(e_i^{n}y_i),c(e_i^{n+1}y_i),c(e_i^{n+2}y_i)\}$ contains exactly $3$ colors, and we may assume without loss of generality that $\{c(y_i),c(e_iy_i),c(e_i^2y_i)\}$ contains $3$ colors for $i=\{0,1\}$. 

\begin{figure}[h]\label{fig:proofpic}
\centering
\begin{subcaptionblock}{0.3\textwidth} 
  \centering
  \begin{tikzpicture}
    \useasboundingbox (-1,-.7) rectangle (3,1.7);
    \begin{scope}
      \draw (0,0) -- (1,0);
      \draw (1,0) -- (2,0);
      \draw (0,0) -- (0,1);
      \draw (1,0) -- (1,1);
      \draw (2,0) -- (2,1);
      \draw (0,1) -- (1,1);
      \draw (1,1) -- (2,1);
    \end{scope}
    \definecolor{cGreen}{RGB}{77,175,74}
    \fill[red] (0,0) circle (3pt);
    \fill[cGreen] (1,0) circle (3pt);
    \fill[blue] (2,0) circle (3pt);
    \draw[fill=white] (0,1) circle (3pt);
    \draw[fill=white] (1,1) circle (3pt);
    \draw[fill=white] (2,1) circle (3pt);

    \begin{scope}[text height=1.5ex,text depth=1ex,every node/.style={scale=.8,inner xsep=5pt,inner ysep=8pt}]
    \node at (0,0) [anchor=north east] {$e_{1-i}^ny_i$};  
    \node at (1,0) [anchor=north] {$e_{1-i}^ne_iy_i$};  
    \node at (2,0) [anchor=north west] {$e_{1-i}^ne_i^2y_i$};  
    \node at (0,1) [anchor=south east] {$e_{1-i}^{n+1}y_i$};  
    \node at (1,1) [anchor=south] {$e_{1-i}^{n+1}e_iy_i$};  
    \node at (2,1) [anchor=south west] {$e_{1-i}^{n+1}e_i^2y_i$};  
    \end{scope}
  \end{tikzpicture}
  \caption{The inductive hypothesis.}
\end{subcaptionblock}
~
\begin{subcaptionblock}{0.3\textwidth} 
  \centering
  \begin{tikzpicture}
   \definecolor{cPurple}{RGB}{152,78,163}
  \definecolor{cGreen}{RGB}{77,175,74}
    \useasboundingbox (-1,-.7) rectangle (3,1.7);
    \begin{scope}
      \draw (0,0) -- (1,0);
      \draw (1,0) -- (2,0);
      \draw (0,0) -- (0,1);
      \draw (1,0) -- (1,1);
      \draw (2,0) -- (2,1);
      \draw (0,1) -- (1,1);
      \draw (1,1) -- (2,1);
    \end{scope}
    \fill[red] (0,0) circle (3pt);
    \fill[cGreen] (1,0) circle (3pt);
    \fill[blue] (2,0) circle (3pt);
    \draw[fill=white] (0,1) circle (3pt);
    \fill[cPurple] (1,1) circle (3pt);
    \draw[fill=white] (2,1) circle (3pt);

    \begin{scope}[->,shorten <=7pt,shorten >=7pt]
      \draw (0,0) -- (1,1);
      \draw (2,0) -- (1,1);
    \end{scope}

  \end{tikzpicture}
  \caption{Determining $c(e_{1-i}^{n+1}e_iy_i)$.}
\end{subcaptionblock}

\begin{subcaptionblock}{0.3\textwidth} 
  \centering
  \begin{tikzpicture}
   \definecolor{cPurple}{RGB}{152,78,163}
  \definecolor{cGreen}{RGB}{77,175,74}
    \useasboundingbox (-1,-.7) rectangle (3,3.5);
    \begin{scope}
      \draw (0,0) -- (1,0);
      \draw (1,0) -- (2,0);
      \draw (0,0) -- (0,1);
      \draw (1,0) -- (1,1);
      \draw (2,0) -- (2,1);
      \draw (0,1) -- (1,1);
      \draw (1,1) -- (2,1);
    \end{scope}
    \fill[red] (0,0) circle (3pt);
    \fill[cGreen] (1,0) circle (3pt);
    \fill[blue] (2,0) circle (3pt);
    \fill[blue] (0,1) circle (3pt);
    \fill[cPurple] (1,1) circle (3pt);
    \fill[red] (2,1) circle (3pt);
  \end{tikzpicture}
  \caption{Determining the rest.}
\end{subcaptionblock}
~
\begin{subcaptionblock}{0.3\textwidth} 
  \centering
  \begin{tikzpicture}
   \definecolor{cPurple}{RGB}{152,78,163}
  \definecolor{cGreen}{RGB}{77,175,74}
    \useasboundingbox (-1,-.7) rectangle (3,3.5);
    \begin{scope}
      \draw (0,0) -- (2,0);
      \draw (0,1) -- (2,1);
      \draw (0,2) -- (2,2);
      \draw (0,-.5) -- (0,2.5);
      \draw (1,-.5) -- (1,2.5);
      \draw (2,-.5) -- (2,2.5);
    \end{scope}
    \fill[red] (0,0) circle (3pt);
    \fill[cGreen] (1,0) circle (3pt);
    \fill[blue] (2,0) circle (3pt);
    \fill[blue] (0,1) circle (3pt);
    \fill[cPurple] (1,1) circle (3pt);
    \fill[red] (2,1) circle (3pt);
    \fill[red] (0,2) circle (3pt);
    \fill[cGreen] (1,2) circle (3pt);
    \fill[blue] (2,2) circle (3pt);

    \node at (0,3) {$\vdots$};
    \node at (1,3) {$\vdots$};
    \node at (2,3) {$\vdots$};
  \end{tikzpicture}
  \caption{The induction continues.}
\end{subcaptionblock}
\caption{An illustration of the proof of Theorem \ref{thm:1.4}.}
\end{figure}

    We next claim that for all $n\in\Z$, 
    \[c(e_{1-i}^{n+1}y_i)=c(e_{1-i}^ne_i^2y_i)\ne c(e_{1-i}^ny_i)=c(e_{1-i}^{n+1}e_i^2y_i),\]
    and we prove this by forward-backward induction on $n\in\Z$. We know that $c(y_i)\ne c(e_i^2y_i)$ which is the case $n=0$. For the inductive step, we assume $c(e_{1-i}^ny_i)\ne c(e_{1-i}^ne_i^2y_i)$. Since $c$ still has no monochromatic $e_i$-edges, $\{c(e_{1-i}^ny_i),c(e_{1-i}^ne_iy_i),c(e_{1-i}^ne_i^2y_i)\}$ contains $3$ colors. Consider the color $c(e_{1-i}^{n+1}e_iy_i)$: Since $c$ also has no monochromatic $e_{1-i}^{-1}e_i^{-1}$, $e_{1-i}^{-1}$, or $e_{1-i}^{-1}e_i$-edges, $c(e_{1-i}^{n+1}e_iy_i)$ must be distinct from $c(e_{1-i}^ny_i),c(e_{1-i}^ne_iy_i),c(e_{1-i}^ne_i^2y_i)$ and it is the missing fourth color. Therefore we know that
    \[\{c(e_{1-i}^{n+1}y_i)\}=\{0,1,2,3\}\setminus\{c(e_{1-i}^{n+1}e_iy_i),c(e_{1-i}^{n}y_i),c(e_{1-i}^{n}e_iy_i)\}=\{c(e_{1-i}^ne_i^2y_i)\}.\]
    The other cases, $c(e_{1-i}^ny_i)=c(e_{1-i}^{n+1}e_i^2y_i)$ and the backward induction case, are similar. This concludes the induction on $n\in\Z$. 

    Finally, since $y_0,y_1$ lie in the same orbit $[x]_{\Z^2}$, there are integers $m,n\in\Z$ such that $e_1^my_0=e_0^ny_1$. By the previous claim, we have $c(e_1^my_0)\ne c(e_1^me_0^2y_0)$, and yet $c(e_0^ny_1)=c(e_0^{n+1}e_1^2y_1)=c(e_0^{n+2}y_1)$, which contradicts $e_1^my_0=e_0^ny_1$. 
\end{proof}

\section{Open Questions}

Our main result establishes that the Borel polychromatic number of a free $\mathbb{Z}^d$-action is exactly $2^d-1$. 
This completely resolves the case of grid actions, but a number of natural directions remain open. 
We highlight several questions that suggest a broader theory of polychromatic colorings within descriptive combinatorics.

\medskip

\noindent\textbf{Beyond $\mathbb{Z}^d$.}  
The natural next step is to ask about groups other than $\mathbb{Z}^d$. 
In particular, what is the correct threshold for groups whose Cayley graphs share geometric or combinatorial features with grids?

\begin{ques}
    Given a group $G$ with planar Cayley graph, what can be said about the Borel polychromatic number for free Borel actions of $G$?
\end{ques}

\medskip

\noindent\textbf{Changing the underlying shapes.}  
Our definition of polychromatic colorings requires every unit cube in $\mathbb{Z}^d$ to see all colors. 
It is natural to replace cubes with more general tiling or face sets, for instance by allowing overlapping translates of an arbitrary finite pattern. 
This leads to the following generalization:

\begin{ques}
    Let $G$ be a grid graph of $\mathbb{Z}^d$, let $T \subseteq \mathbb{Z}^d$ be a finite shape, and let $S := \mathbb{Z}^d T$ be its set of translates. 
    What can we say about $\chi^p_B(G)$ when our family of sets is taken to be $S$?
\end{ques}

A broader challenge is to characterize which families of shapes $T$ admit the same threshold $|T|-1$, and which lead to fundamentally different behavior.

\medskip

\noindent\textbf{Rigidity vs.\ flexibility of $2^d$-colorings.}  
Finally, we ask whether Borel $2^d$-polychromatic colorings encode strictly stronger structure than tuples of $2$-colorings of the one-dimensional subactions.

\begin{ques}\label{que:1.5}
    Let $\mathbb{Z}^d \curvearrowright X$ be a free Borel action and let $G$ be its grid graph. 
    When $d \geq 3$, is the existence of a Borel $2^d$-polychromatic coloring strictly stronger than the existence of a ($0$-fold invariant) $d$-tuple of Borel $2$-colorings of the coordinate subactions?
\end{ques}

Question~\ref{que:1.5} can be viewed as asking about the rigidity or flexibility of general $2^d$-polychromatic colorings on $\mathbb{Z}^d$. 
A positive answer (the ``rigid case'') might be approached using notions of ergodicity, such as the grid-periodicity phenomena studied in \cite{GJKS2015}, combined with a sharper structural analysis of all $2^d$-colorings, similar to our treatment in Theorem~\ref{thm:1.4} where every $4$-polychromatic coloring of the $\mathbb{Z}^2$-grid was shown to be invariant under either $e_0^2$ or $e_1^2$. 
On the other hand, a negative answer (the ``flexible case'') may be approachable via toast constructions, as in Theorem~\ref{thm:2.3}. 

\medskip

\noindent We hope that progress on these problems will lead to a systematic theory of polychromatic colorings in the Borel setting, paralleling the rich development of Borel chromatic numbers over the last two decades.

\section*{Acknowledgements}

The authors would like to thank Andrew Marks and Robin Tucker-Drob for their guidance, support, and many helpful conversations throughout the development of this work.  
The first author is supported by the National Science Foundation Graduate Research Fellowship under Grant No.~DGE-2146752 and by a Simons Foundation Dissertation Fellowship in Mathematics.

\bibliography{mybib}{}
\bibliographystyle{plain}
\end{document}